\newtheorem{theorem}{\bf Theorem}[section]
\newtheorem{lemma}[theorem]{\bf Lemma}
\newtheorem{corollary}[theorem]{\bf Corollary}
\newtheorem{conjecture}[theorem]{\bf Conjecture}
\newtheorem{remark}[theorem]{\bf Remark}
\numberwithin{equation}{section}
\begin{document}
\title{{\Large Size Ramsey Numbers of Stars versus Cliques}}

\author{M. Miralaei$^{\textrm{a}}$, G.R. Omidi$^{\textrm{a},\textrm{b},1}$, M. Shahsiah$^{\textrm{b}}$ \\[2pt]
{\small $^{\textrm{a}}$Dept. Mathematical Sciences, Isfahan University of Technology},\\
{\small Isfahan, 84156-83111, Iran}\\
{\small $^{\textrm{b}}$School of Mathematics, Institute for Research in Fundamental Sciences (IPM),}\\
{\small P.O. Box 19395-5746, Tehran, Iran }\\[2pt]
{m.miralaei@math.iut.ac.ir, romidi@cc.iut.ac.ir,
shahsiah@ipm.ir}}

\date{}

\maketitle \footnotetext[1] {This research is partially
carried out in the IPM-Isfahan Branch and in part supported
by a grant from IPM (No. ????????).} \vspace*{-0.5cm}

\begin{abstract}
The size Ramsey number $ \hat{r}(G,H) $ of two graphs $ G $ and $ H $ is the smallest integer $ m $ such that there exists a graph $ F $ on $ m $ edges with the property that every red-blue colouring of the edges of $ F $, yields a red copy of $ G $ or a blue copy of $ H $.
In $ 1981 $, Erd\H{o}s observed that
$\hat{r}(K_{1,k},K_{3})\leq \binom{2k+1}{2}-\binom{k}{2}$ and he conjectured that the corresponding upper bound on $ \hat{r}(K_{1,k},K_{3}) $ is sharp.
In $ 1983 $, Faudree and Sheehan extended this conjecture as follows:
\begin{eqnarray*}
\hat{r}(K_{1,k},K_{n})=\left \{
\begin{array}{lr}
\binom{k(n-1)+1}{2}-\binom{k}{2} & ~k\geq n~ \text{or}~ k~ \text{odd}.\\
\\
\binom{k(n-1)+1}{2}-k(n-1)/2 & \text{otherwise}.
\end{array} \right.
\end{eqnarray*}
They proved the case $ k=2 $. In $ 2001 $, Pikhurko showed that this conjecture is not true for $ n=3 $ and
$ k\geq 5 $, disproving the mentioned conjecture of Erd\H{o}s. Here we prove Faudree and Sheehan's conjecture for a given $ k\geq 2 $
and $ n\geq k^{3}+2k^{2}+2k  $.\\

\noindent{\small Keywords: Ramsey number, Size Ramsey number, Restricted size Ramsey number.}\\
{\small AMS subject classification: 05C55, 05D10}

\end{abstract}

%%%%%%%%%%%%%%%%%%%%%%%%%%%%%%%%%%%%%%%%%%%%%%%%%%%%%%%%%%%%%%%%%%%%%%%%
\section{Introduction}

Given two graphs $ G $ and $ H $, we say that $ F\longrightarrow (G,H) $,
if for any red-blue colouring of the edges of $ F $ we have a red copy of $ G $ or a blue copy of $ H $.
The {\it size Ramsey number} $ \hat{r}(G,H) $ of two graphs $ G $ and $H$ is the minimum number of edges of a graph $ F $ such that $ F\longrightarrow (G,H) $. Using this notation, the {\it Ramsey number} $r(G,H)$ is the minimum integer
$n$ such that $K_{n}\longrightarrow (G,H)$.
We also define the {\it restricted size Ramsey number} $\hat{r}^{*}(G,H)$ for two graphs $G$ and $H$ as follows:
\begin{eqnarray*}
\hat{r}^{*}(G,H)=min\{\vert E(F) \vert : F\longrightarrow (G,H), \vert V(F) \vert =r(G,H)\}.
\end{eqnarray*}
Clearly for every two graphs $G$ and $H$, we have $ \hat{r}(G,H) \leq \hat{r}^{*}(G,H) $.
Also, by the definition of $r(G,H)$, we have
$K_{r(G,H)}\longrightarrow (G,H)$.
Since the complete graph on $ r(G,H) $
vertices has
$ r(G,H) \choose 2 $
edges, we obtain trivially
\begin{eqnarray}\label{1}
\hat{r}(G,H)\leq {r(G,H)\choose 2}.
\end{eqnarray}
Chv\'{a}tal showed that equality holds in (\ref{1}), when $ G $ and $ H $ are complete graphs (see \cite{3}).\\
%-------------------------------------------------------------------------------------------------------------------

The investigation of the size Ramsey numbers of graphs was initiated by
Erd\H{o}s et al. \cite{3} in $1978$.
In this paper, we investigate the size Ramsey number of $K_{1,k}$, the star with $k$ edges,
versus the complete graph $K_{n}$. These numbers were first considered by
Erd\H{o}s et al. \cite{3}. They showed the following asymptotic result:
\begin{theorem} \label{thm1}{\rm \cite{3}}
Let $\varepsilon$ be a fixed real number satisfying $ 0<\varepsilon<1  $ and let $n\geq 3$ be a fixed natural number. If $k$ is sufficiently large, then
\begin{eqnarray*}
\hat{r}(K_{1,k},K_{n})\geq \max \{k^{2}/2, (1-\varepsilon) \lfloor (n-2)^{2}/4\rfloor k^{2}/2\}.
\end{eqnarray*}
\end{theorem}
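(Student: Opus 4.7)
The plan is to argue by contrapositive: given any graph $F$ with $m=|E(F)|$ strictly less than the claimed bound, we construct a red--blue edge-colouring of $F$ containing neither a red $K_{1,k}$ nor a blue $K_n$, thus contradicting $F\to(K_{1,k},K_n)$.

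For the bound $m\geq k^2/2$, the starting observation is that $F\to(K_{1,k},K_n)$ forces both $K_{1,k}\subseteq F$ and $K_n\subseteq F$: the all-blue colouring leaves no red edge, so the arrow relation demands a blue $K_n$, hence $K_n\subseteq F$; symmetrically the all-red colouring gives $K_{1,k}\subseteq F$, so in particular $\Delta(F)\geq k$. An iterated version of the same observation -- colouring the $k-1$ edges at a chosen high-degree vertex red at each step and re-applying the arrow property to the residual graph -- shows that $F$ must contain sufficiently many vertices of degree at least $k$, and then the standard double-count $2m=\sum_v d_v$ yields $m\geq k^2/2$.

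For the main bound $(1-\varepsilon)\lfloor(n-2)^2/4\rfloor k^2/2$, the plan is a random edge-colouring: colour each edge of $F$ independently red with probability $p$, for a parameter $p=p(n,k)$ to be chosen. The expected number of red copies of $K_{1,k}$ is $\sum_v\binom{d_v}{k}p^k$, and the expected number of blue copies of $K_n$ is at most $\kappa_n(F)(1-p)^{\binom{n}{2}}$, where $\kappa_n(F)$ denotes the number of $K_n$-subgraphs of $F$. By the Kruskal--Katona theorem, $\kappa_n(F)\leq(1+o(1))(2m)^{n/2}/n!$, and the star count $\sum_v\binom{d_v}{k}$ is controlled by the degree sequence, whose $\ell^1$-sum equals $2m$. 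If the two expectations together are strictly less than one, the probabilistic method yields a colouring avoiding both forbidden structures, contradicting the arrow relation. Optimising over $p$ to balance the red and blue contributions and then rearranging for $m$ produces the claimed lower bound; the factor $(1-\varepsilon)$ absorbs the lower-order Kruskal--Katona error provided $k$ is sufficiently large.

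The chief obstacle is extracting the exact constant $\lfloor(n-2)^2/4\rfloor$, which is the Tur\'an number of $K_3$ on $n-2$ vertices. A naive union bound applied globally to $F$ loses a multiplicative constant, because an adversarial $F$ may concentrate its degrees on a few very high-degree vertices or otherwise distort the trade-off between red $K_{1,k}$'s and blue $K_n$'s. To recover the sharp constant one should first locate, via a deterministic structural step, a Tur\'an-type substructure inside $F$ -- for instance a blow-up of $K_{\lfloor(n-2)/2\rfloor,\lceil(n-2)/2\rceil}$ on the high-degree vertices -- and apply the random colouring only inside this substructure, where the constraints ``no red $K_{1,k}$'' and ``no blue $K_n$'' become simultaneously tight. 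Managing this substructure in a way that yields the correct constant uniformly over all candidate $F$ is where the bulk of the technical work concentrates.
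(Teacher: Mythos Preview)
The paper does not contain a proof of this theorem at all: Theorem~\ref{thm1} is quoted from Erd\H{o}s, Faudree, Rousseau and Schelp~\cite{3} as background in the introduction, and the authors make no attempt to reprove it. So there is no ``paper's own proof'' to compare against; your proposal can only be judged on its own merits.

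On those merits, there is a genuine gap. For the easy bound $m\ge k^{2}/2$ your iteration sketch is underspecified: you speak of colouring ``the $k-1$ edges at a chosen high-degree vertex'' red and passing to a residual graph, but a vertex of degree $\ge k$ has at least $k$ incident edges, and you do not say what happens to the remaining ones, nor why the arrow property survives in the residual graph. This part can be repaired, but not by the argument as written.

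The serious problem is the main bound. You correctly identify that a global random colouring with a Kruskal--Katona estimate on the number of $K_{n}$'s does \emph{not} produce the constant $\lfloor (n-2)^{2}/4\rfloor$, and you then propose to ``first locate, via a deterministic structural step, a Tur\'an-type substructure inside $F$'' and randomise only there. But this is precisely the missing idea, not a proof: you give no mechanism for finding such a blow-up of $K_{\lfloor(n-2)/2\rfloor,\lceil(n-2)/2\rceil}$ inside an arbitrary $F$ with $F\to(K_{1,k},K_{n})$, no reason why the arrow relation forces such a substructure to exist, and no calculation showing that randomising on it recovers the exact Tur\'an constant. As it stands, the last paragraph is a description of what a proof would have to accomplish rather than an outline of how to accomplish it. The original argument in~\cite{3} is where the actual work is done; your proposal does not reproduce it.
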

\noindent Let the graph $ K_{k+1}+\overline{K}_{k} $ be obtained from $ K_{k+1} $ by considering $ k $ new vertices and joining each vertex of $ K_{k+1} $ to all these $ k $ additional vertices.
In \cite{2}, Erd\H{o}s observed that
$K_{k+1}+\overline{K}_{k}\longrightarrow (K_{1,k},K_{3})$ and conjectured that the corresponding upper bound on $ \hat{r}(K_{1,k},K_{3}) $ is sharp.
Faudree and Sheehan generalized this result and showed that:
\begin{theorem}\label{thm2} {\rm \cite{4}}
Let $k,n\geq 2 $, then
\begin{eqnarray*}
\hat{r}^{*}(K_{1,k},K_{n})=\left \{
\begin{array}{lr}
\binom{k(n-1)+1}{2}-\binom{k}{2} & ~k\geq n~\text{or}~ k~ \text{odd},\\
\\
\binom{k(n-1)+1}{2}-k(n-1)/2 & ~\text{otherwise}.
\end{array} \right.
\end{eqnarray*}
\end{theorem}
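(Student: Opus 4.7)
Since Theorem \ref{thm2} is an equality, I would prove matching upper and lower bounds on $\hat r^*(K_{1,k},K_n)$, working with graphs $F$ of order exactly $N=r(K_{1,k},K_n)=k(n-1)+1$, the Chv\'atal--Harary value for a tree versus a clique.

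For the upper bound I exhibit an extremal $F$. When $k\ge n$ or $k$ is odd, I fix $S\subseteq V(F)$ with $|S|=k$ and let $F=K_V-E(K_S)$, which removes exactly $\binom{k}{2}$ edges. When $k$ is even and $k<n$, a $(k-1)$-regular subgraph on $N$ vertices is forbidden by the parity of $(k-1)N$, and the minimal deficit instead takes the form of a near-$(k-1)$-factor with $k(n-1)/2$ edges. To verify $F\longrightarrow(K_{1,k},K_n)$, I take any red--blue colouring of $E(F)$ with red degree $\le k-1$ everywhere and pick $v\in V\setminus S$; the blue neighbourhood $B(v)$ has at least $k(n-2)+1$ vertices, and $F[B(v)]$ contains the extremal $(K_{1,k},K_{n-1})$-arrower as a subgraph, so by induction on $n$ the inherited colouring yields a blue $K_{n-1}\subseteq B(v)$, which together with $v$ produces the required blue $K_n$.

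For the lower bound, let $F$ on $N$ vertices satisfy $F\longrightarrow(K_{1,k},K_n)$ and write $D=\overline F$. I want $|E(D)|\ge\binom{k}{2}$ (respectively $\ge k(n-1)/2$). I argue by contradiction: if the bound fails, I construct a red subgraph $R\subseteq F$ with $\Delta(R)\le k-1$ such that $F\setminus R$ has no $K_n$, contradicting the arrow property. Equivalently, I build $R$ disjoint from $D$ with max degree $k-1$ so that $D\cup R$ contains an edge inside every $n$-subset of $V$. A first crude bound comes from Tur\'an's theorem, namely
$$
|E(D)|+|E(R)|\ \ge\ \binom{N}{2}-t(N,K_n)\ =\ \binom{k+1}{2}+(n-2)\binom{k}{2},
$$
but when combined with $|E(R)|\le\lfloor(k-1)N/2\rfloor$ this only forces $|E(D)|\gtrsim(k+1)/2$, far weaker than the target. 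To reach the sharp constant I would use a Zykov-type symmetrisation to reduce to the case in which $D\cup R$ is the edge-complement of the Tur\'an graph $T(N,n-1)$, i.e.\ a disjoint union of one $K_{k+1}$ and $n-2$ copies of $K_k$, and then analyse the $(k-1)$-bounded subgraphs $R$ that can carry this decomposition while avoiding $D$.

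The main obstacle is closing the gap between the crude Tur\'an estimate and the sharp thresholds $\binom{k}{2}$ and $k(n-1)/2$. To do so I must track the joint rigidity imposed by $\Delta(R)\le k-1$, $R\cap D=\emptyset$, and the disjoint-clique structure of the Tur\'an complement; the book-keeping splits neatly according to the parity of $(k-1)N$. For $k$ odd (or $k\ge n$) this quantity is even, so $R$ can be taken $(k-1)$-regular and the deficit concentrates into a single $K_k$ of size $\binom{k}{2}$. For $k$ even and $k<n$ the parity obstruction forbids a $(k-1)$-regular $R$ on $N$ vertices, and the deficit spreads along a near-$(k-1)$-factor of weight $k(n-1)/2$, producing the case split in the statement.
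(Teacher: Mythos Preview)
The paper does not give its own proof of Theorem~\ref{thm2}; it is quoted from Faudree and Sheehan~\cite{4}. That said, the paper reproduces the key structural tool from~\cite{4} as Lemma~\ref{lem1}, and the $\ell=0$ case of Theorem~\ref{thm a} is precisely the lower bound in Theorem~\ref{thm2}, so the intended argument is visible there.

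Your lower-bound strategy has a real gap, and the missing ingredient is not symmetrisation but the dichotomy of Lemma~\ref{lem1}. Suppose $e(\overline F)$ exceeds the claimed threshold. Then in particular $e(\overline F)\ge\binom{k}{2}+1$, so Lemma~\ref{lem1} applies to $\overline F$: either $\overline F$ contains a $(k{+}1)$-set $A$ with $\delta(\overline F[A])\ge1$, or $\overline F$ is a matching of size $e(\overline F)$. In the first case, partition $V(F)$ into $A$ and $n-2$ sets of size $k$, colour edges inside parts red and the rest blue; every vertex of $A$ has a non-neighbour in $A$, so red degrees are at most $k-1$ everywhere, while the blue graph is $(n{-}1)$-partite and hence $K_n$-free, contradicting $F\to(K_{1,k},K_n)$. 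In the second case $\overline F$ is a matching on $k(n-1)+1$ vertices, so $e(\overline F)\le k(n-1)/2$; this is only possible when $k$ is even and $k<n$, and it pins the threshold at exactly $k(n-1)/2$ in that regime. Your Zykov-type move does not obviously preserve the two side constraints $R\cap D=\emptyset$ and $\Delta(R)\le k-1$ simultaneously, so the reduction to the Tur\'an complement is unjustified; Lemma~\ref{lem1} replaces that whole step.

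On the upper bound: your construction $\overline F=K_k$ is the right one when $k$ is odd or $k\ge n$, but your description of the even, $k<n$ case is garbled. What is needed there is $\overline F$ equal to a near-perfect matching ($k(n-1)/2$ independent edges on $N=k(n-1)+1$ vertices), not a ``near-$(k-1)$-factor''. Your inductive verification can be made to work when $\overline F=K_k$, since restricting to $k(n-2)+1$ vertices of the blue neighbourhood that contain $S$ reproduces the $(n{-}1)$-construction; the matching case needs a separate argument.
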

\noindent They also posed the following conjecture, generalizing the mentioned conjecture of Erd\H{o}s on 
$ \hat{r}(K_{1,k},K_{3}). $
\begin{conjecture}\label{conj1} {\rm \cite{4}}
Let $k,n \geq 2$. Then $\hat{r}(K_{1,k},K_{n})=\hat{r}^{*}(K_{1,k},K_{n})$.
\end{conjecture}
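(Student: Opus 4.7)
I begin by flagging that the conjecture as worded cannot be true for all $k,n\ge 2$: the Pikhurko construction cited in the introduction exhibits, for every $k\ge 5$, a graph on more than $r(K_{1,k},K_3)$ vertices that arrows $(K_{1,k},K_3)$ with strictly fewer edges than $\hat{r}^{*}(K_{1,k},K_3)$. So any honest plan can hope to cover only the regime of $(k,n)$ in which such "spread out" witnesses are edge-count-forbidden; my strategy is designed to succeed precisely where the extra vertices cost more edges than the tight example of Theorem~\ref{thm2} saves.

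The only non-trivial direction is $\hat{r}(K_{1,k},K_n)\ge \hat{r}^{*}(K_{1,k},K_n)$. I reformulate $F\To (K_{1,k},K_n)$ as follows: deleting from $F$ any edge set $R$ with $\D(R)\le k-1$ must leave a copy of $K_n$. Let $F$ be a minimum-edge arrow graph. A one-vertex analysis (red-colour a star centred at a low-degree vertex, everything else blue) forces $\de(F)\ge k$ after stripping pendant structure, and a comparable analysis on a vertex $v$ of small degree shows $F-v$ already arrows the pair unless deleting $v$ destroys the arrow relation through a specific blue $K_n$ using $v$. Combining these reductions with Parsons' formula for $r(K_{1,k},K_n)$ gives $|V(F)|\ge r(K_{1,k},K_n)$, and in the equality case $|V(F)|=r(K_{1,k},K_n)$ the definition of $\hat r^{*}$ directly yields $|E(F)|\ge \hat r^{*}(K_{1,k},K_n)$.

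The substance is the \emph{excess} case $v:=|V(F)|>r(K_{1,k},K_n)$. I would pick a vertex $u$ of near-minimum degree, compare $F-u$ to a restricted-size Ramsey instance on $v-1$ vertices, and charge to $u$ both its own incident edges and the edges across any cut $(S,\overline S)$ that a red $(k-1)$-regular subgraph could exploit; the arrow property on $F$ forces enough such cut edges that the total is at least the edge count of the tight example on $r(K_{1,k},K_n)$ vertices. Repeating this inductively in $v$ and optimising, one expects an inequality of the form $|E(F)|\ge \binom{r(K_{1,k},K_n)}{2}-X$ with $X$ the Faudree--Sheehan correction term $\binom{k}{2}$ or $k(n-1)/2$, valid whenever the per-extra-vertex cost dominates $X$.

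The main obstacle is exactly the one Pikhurko's construction exploits: when $n$ is close to $k$, the correction term $X$ is comparable to what an extra vertex can save, and the averaging inequality above becomes loose rather than tight. In that window, Pikhurko-style sparse witnesses on many vertices actually exist and the plan provably fails; I see no way to rescue the small-$n$ end without a fundamentally new idea. Consequently the strategy outlined above cannot reach the conjecture in full generality---indeed it must fall short, since the $n=3$, $k\ge 5$ case is false---and its honest output is the conjectured equality only once $n$ is sufficiently large relative to $k$, with the small-$n$ regime genuinely open beyond the known counterexamples.
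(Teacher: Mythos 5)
You are right to flag at the outset that the statement, as worded, is false: Pikhurko's construction refutes it for $n=3$ and $k\geq 5$, and the paper itself only establishes the equality for $n\geq k^{3}+2k^{2}+2k$ (Theorem \ref{thm3}). Your diagnosis of where the difficulty lives---sparse arrowing graphs on many more than $r(K_{1,k},K_{n})$ vertices---also matches the paper's framing. But what you offer for the large-$n$ regime is a plan, not a proof, and the two load-bearing ingredients of the actual argument are absent. First, you have no mechanism for the case where the excess $\ell=\vert V(F)\vert-r(K_{1,k},K_{n})$ is small but positive: the paper's Theorem \ref{thm a} handles $0\leq \ell\leq \lfloor (kn-2k^{2})/(k+1)^{2}\rfloor$ by passing to the complement---if $e(G)<\hat{r}^{*}(K_{1,k},K_{n})$ then $e(\overline{G})\geq R\ell+\binom{\ell}{2}+R'$, which by Lemma \ref{lemmm} forces $\ell+1$ disjoint $(k+1)$-sets of minimum degree at least $1$ in $\overline{G}$; padding these with $k$-sets to a partition into $n-1$ classes and colouring within classes red and across classes blue excludes both a red $K_{1,k}$ and a blue $K_{n}$. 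Nothing in your outline produces this colouring, and your appeal to ``the definition of $\hat{r}^{*}$'' only works when $\ell=0$ exactly.

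Second, your induction one vertex at a time does not preserve the arrowing structure: deleting a single near-minimum-degree vertex from $F$ gives no control over what the remainder arrows. The paper instead peels off an entire set $B_{i}=T_{i-1}\setminus T_{i}$ with $\Delta(G[B_{i}])<k$, supplied by Pikhurko's Lemma \ref{lem 2}, precisely so that colouring the peeled layers red and the remaining uncoloured edges blue certifies $G[T_{i}]\longrightarrow(K_{1,k},K_{n-i})$; the edge count then accumulates $k$ edges from each vertex of $T_{i}$ into $B_{i}$ at every step, and the bookkeeping of the excesses $\ell_{1}\geq\ell_{2}\geq\cdots$ against the thresholds $m_{i}$ is exactly where the final inequalities close and where the hypothesis $n\geq k^{3}+2k^{2}+2k$ enters. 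Without an analogue of this peeling lemma and without the explicit estimates of Cases 1--4 of the paper's proof, your charging scheme remains an aspiration: you never exhibit the cut edges you propose to charge, never quantify the per-extra-vertex cost, and never state the range of $n$ for which your inequality would hold. So the proposal correctly identifies that only a partial result is possible, but it does not prove that partial result.
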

\noindent
They proved the case $k=2$ of this conjecture (see \cite{4}). Pikhurko \cite{7}, with a nice counterexample, disproved the
Erd\H{o}s conjecture on $ \hat{r}(K_{1,k},K_{3}) $ for $ k \geq 5 $ (the case $ n=3 $ of Conjecture \ref{conj1}).
More precisely, he showed that $ \hat{r}(K_{1,k},K_{3})<k^{2}+\sqrt{2}k^{3/2}+k $ for $ k\geq 1 $. One can easily check that  for $ k\geq 5 $, we have
\begin{eqnarray*}
k^{2}+\sqrt{2}k^{3/2}+k < \binom{2k+1}{2}-\binom{k}{2}.
\end{eqnarray*}
Also, Pikhurko \cite{8} showed that for any graph $ F $ with chromatic number $ \chi(F)\geq 4 $,
\begin{eqnarray*}
\hat{r}(K_{1,k}, F)\leq \chi(F)\big(\chi(F)-2\big)k^{2}/2 +o(k^{2})
\end{eqnarray*}
and he conjectured that this is sharp. He proved his conjecture for the case $ \chi(F)=4 $.\\
%________________________________________________________________________

In this paper, we show that for a fixed $k\geq 2$, Conjecture \ref{conj1}
holds for\linebreak $n\geq k^{3}+2k^{2}+2k$. More precisely, we demonstrate the following theorem.
\begin{theorem} \label{thm3}
Let $ k\geq 2 $ and $n\geq k^{3}+2k^{2}+2k$. Then
\begin{eqnarray*}
\hat{r}(K_{1,k},K_{n})=\hat{r}^{*}(K_{1,k},K_{n})= \left \{
\begin{array}{lr}
\binom{k(n-1)+1}{2}-\binom{k}{2} & ~\text{if}~ k ~\text{is odd},\\
\\
\binom{k(n-1)+1}{2}-k(n-1)/2 & \text{if} ~k ~\text{is even}.
\end{array} \right.
\end{eqnarray*}
\end{theorem}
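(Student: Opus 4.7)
The upper bound $\hat{r}(K_{1,k}, K_n) \leq \hat{r}^*(K_{1,k}, K_n)$ is immediate from the general inequality together with Theorem~\ref{thm2}, noting that the hypothesis $n \geq k^{3}+2k^{2}+2k$ forces $n>k$, so the relevant case of Theorem~\ref{thm2} gives $\hat{r}^*(K_{1,k},K_n)=\binom{r}{2}-c_k$, where $r:=k(n-1)+1$ and $c_k:=\binom{k}{2}$ if $k$ is odd and $c_k:=k(n-1)/2$ if $k$ is even. All the work lies in the matching lower bound; that is, we must show $|E(F)|\geq\binom{r}{2}-c_k$ for every graph $F$ with $F\to(K_{1,k},K_n)$. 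Since $F\subseteq K_{|V(F)|}$ and arrowing is monotone under edge addition, $K_{|V(F)|}\to(K_{1,k},K_n)$, hence $|V(F)|\geq r$.

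The plan is the following reduction: prove that $F$ contains an induced subgraph $H$ on exactly $r$ vertices satisfying $H\to(K_{1,k},K_n)$. Once this is done, Theorem~\ref{thm2} applied to $H$ yields $|E(F)|\geq|E(H)|\geq\hat{r}^*(K_{1,k},K_n)=\binom{r}{2}-c_k$, completing the proof. The reduction itself is obtained by iterating the following key lemma: if $|V(F)|>r$, then some vertex $v\in V(F)$ can be removed without destroying the arrowing property, i.e., $F-v\to(K_{1,k},K_n)$.

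To prove the key lemma, I would argue by contradiction: assume $|V(F)|>r$ and yet $F-v\not\to(K_{1,k},K_n)$ for every $v\in V(F)$. Then for each $v$ there is a bad red--blue colouring $\chi_v$ of $E(F-v)$ (no red $K_{1,k}$, no blue $K_n$). Fix any such $v$ and $\chi_v$, and attempt to extend $\chi_v$ to a bad colouring of $F$ by carefully colouring the edges at $v$. Two constraints must be met simultaneously: the red $v$-edges must be at most $k-1$ in number (so that $v$ does not become the centre of a red $K_{1,k}$) and can only go to neighbours whose $\chi_v$-red-degree is at most $k-2$ (to keep their red degrees below $k$); meanwhile, every blue $K_n$ in the extension that would use $v$ must be blocked, i.e., every blue $K_{n-1}$ of $\chi_v$ contained in $N_F(v)$ must meet at least one red $v$-edge. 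The heart of the argument is to show that the family of such blue $K_{n-1}$ ``threats'' admits a transversal of size at most $k-1$ consisting of neighbours of $v$ with slack in their red degree.

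The main obstacle is precisely the construction of this transversal: bounding the number and combinatorial structure of blue $K_{n-1}$ threats in the restriction of $\chi_v$ to $N_F(v)$, and showing that they can be pierced by $k-1$ red $v$-edges without pushing any neighbour's red degree over $k-1$. The hypothesis $n\geq k^{3}+2k^{2}+2k$ enters here through a union-bound/pigeonhole calculation: the three summands of that threshold correspond, roughly, to the contribution of $v$ itself, its $O(k^{2})$ neighbours at $\chi_v$-red-degree exactly $k-1$, and the remaining neighbours, guaranteeing that a valid transversal always exists when $n$ is large. A weaker threshold on $n$ would demand a much more intricate case analysis, as already witnessed by Pikhurko's counterexample~\cite{7} for $n=3$, where this reduction fails outright.
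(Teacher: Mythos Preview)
Your reduction strategy is different from the paper's, and the gap is exactly where you yourself flag ``the main obstacle'': the transversal step is only asserted, not proved. You fix a vertex $v$ and a bad colouring $\chi_v$ of $F-v$ and claim the blue-$K_{n-1}$ threats in $N_F(v)$ can be pierced by at most $k-1$ red $v$-edges landing on neighbours of $\chi_v$-red-degree at most $k-2$. But the red graph in $\chi_v$ has maximum degree at most $k-1$, and nothing prevents most (or, up to a parity obstruction, all) neighbours of $v$ from already sitting at red degree $k-1$; in that regime you can place no red $v$-edges at all and would need $N_F(v)$ to be free of blue $K_{n-1}$'s, over which you have no control. The ``union-bound/pigeonhole'' sentence is speculation: no calculation is carried out, the proposed decomposition of $k^{3}+2k^{2}+2k$ into three contributions is not justified, and since the colourings $\chi_v$ vary with $v$ there is no single global object to count against. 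In fact your key lemma would prove the strictly stronger structural statement that, for $n$ large, every $F\to(K_{1,k},K_n)$ contains an induced arrowing subgraph on exactly $r$ vertices; this goes beyond Theorem~\ref{thm3} and is nowhere established.

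For comparison, the paper never tries to cut $|V(F)|$ down to $r$. It iterates Pikhurko's peeling lemma (Lemma~\ref{lem 2}) to build a chain $V(G)=T_0\supset T_1\supset\cdots\supset T_j$ with $G[T_i]\to(K_{1,k},K_{n-i})$ and each vertex of $T_i$ sending at least $k$ edges to every earlier layer $B_s=T_{s-1}\setminus T_s$, $1\le s\le i$. The process halts once the excess $\ell_j=|T_j|-r(K_{1,k},K_{n-j})$ drops below the threshold of Theorem~\ref{thm a} (or $j=n-3$). At that point $e(G[T_j])$ is bounded below via Theorem~\ref{thm a} or Theorem~\ref{them}, and the cross-layer edges are summed to recover $e(G)\ge\hat{r}^{*}(K_{1,k},K_n)$; the hypothesis $n\ge k^{3}+2k^{2}+2k$ enters only through the algebraic inequalities in the ensuing case analysis on $j$, not through any transversal construction.
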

\noindent Note that, we also make no attempt to give out a better lower bound for $ n $ in terms on $ k $ in Theorem \ref{thm3}. Throughout the paper, for the sake of clarity of presentation, we omit floor and ceiling signs whenever they are not crucial.
%--------------------------------------------------------------------------------------------

\noindent
\textbf{Conventions and Notations:}
For a graph $ G $, we write $ V(G) $,  $ E(G) $ and $ e(G) $ for the vertex set, edge set and the number of edges of $ G $, respectively.
For $ v \in V(G) $, by $ N_{G}(v) $ we mean the set of all neighbors of $ v $ and the degree $ d_{G}(v) $ of $ v $ is
$ d_{G}(v)=\vert N_{G}(v) \vert $. We denote by
$ \delta(G) $ and $ \Delta(G) $ the minimum and maximum degrees of $ G $, respectively.
Let $ X\subseteq V(G) $. Then $ G[X] $ is the induced subgraph of $ G $ with vertex set $ X $. We write $ G\setminus X $ for $ G[V(G)\setminus X] $.
Let $ A,B \subset V(G) $, then $ e(A,B)=\vert \{ \{x,y\}\in E(G) : x\in A, y\in B\}\vert $,
is the number of edges connecting a vertex of $ A $ to a vertex of $ B $.
 By $ \overline{G} $ we mean the complement of $ G $.\\
%---------------------------------------------------------------------------------------------
\section{Preliminaries}
In this section, we prove some results that will be used in the follow up section. We also recall some results from \cite{4} and \cite{8}.
The following theorem is indeed a special case of Theorem $ 3.1$ of \cite{8}.
\begin{theorem}\label{them} {\rm \cite{8}}
Let $ k\geq 2 $ and $ n\geq 1 $. If $ G $ is a graph so that $ G\longrightarrow(K_{1,k},K_{n}) $, then $ e(G)\geq k^{2}\binom{n-1}{2} $.
\end{theorem}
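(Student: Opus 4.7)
I argue the contrapositive: assuming $e(G)<k^{2}\binom{n-1}{2}$, I will build a red-blue colouring of $E(G)$ with no red $K_{1,k}$ and no blue $K_{n}$, contradicting $G\longrightarrow(K_{1,k},K_{n})$. The right template is suggested by the identity
\[
k^{2}\binom{n-1}{2}=e\bigl(T(k(n-1),n-1)\bigr),
\]
where $T(k(n-1),n-1)$, the complete $(n-1)$-partite graph with parts of size $k$, is the extremal $K_{n}$-free graph on $k(n-1)$ vertices. Accordingly, I look for an $(n-1)$-partition $V(G)=V_{1}\sqcup\cdots\sqcup V_{n-1}$ in which every vertex has at most $k-1$ neighbours inside its own class; colouring crossing edges blue makes the blue graph $(n-1)$-partite (hence $K_{n}$-free), and colouring within-class edges red makes the red-degree at most $k-1$ at every vertex (hence no red $K_{1,k}$).

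The key construction step is a local-improvement argument. Starting from an arbitrary $(n-1)$-partition $\pi$, while some vertex $v$ has at least $k$ neighbours in its own class, I move $v$ into a class in which it has strictly fewer neighbours; the total number of in-class edges strictly decreases at each move, so the procedure terminates at a local minimum $\pi^{*}$. At $\pi^{*}$ every vertex $v$ satisfies $|N_{G}(v)\cap V_{\pi^{*}(v)}|\leq d_{G}(v)/(n-1)$, so if $\Delta(G)\leq (k-1)(n-1)$ the partition $\pi^{*}$ already delivers a valid colouring. Otherwise the arrow hypothesis combined with the local-min inequality forces at least one vertex of $G$ to have degree at least $k(n-1)$, recovering the weak consequence $\Delta(G)\geq k(n-1)$.

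The main obstacle is to upgrade this degree consequence to the full edge lower bound $e(G)\geq k^{2}\binom{n-1}{2}$. My plan is to iterate the local-improvement procedure while book-keeping the edges it is ``forced'' to place within a class: each time a bad vertex $v$ appears at a local minimum, I record $k$ of its in-class incident edges as unavoidable, delete them from $G$, and rerun the local improvement on what remains. Repeating this until no bad vertex is produced should force the number of recorded edges to be at least $k^{2}\binom{n-1}{2}$, and since each recorded edge lies in $G$, this yields the desired bound. Getting the accounting to close with exactly this constant is the crux, since naive potentials only reprove $\Delta(G)\geq k(n-1)$; a cleaner alternative would be a one-shot probabilistic argument (for instance, via the Lov\'asz Local Lemma) producing a partition that simultaneously balances in-class degrees at all vertices whenever $e(G)$ is below the threshold, bypassing the iteration entirely.
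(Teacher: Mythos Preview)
The paper does not supply a proof of this statement; it is quoted as a special case of Theorem~3.1 of~\cite{8}. So there is no in-paper argument to compare against, and I assess your plan on its own.

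Your local-improvement step is correct and recovers the Lov\'asz defective-colouring bound: at a local minimum of the number of in-class edges, every vertex $v$ has at most $d_G(v)/(n-1)$ neighbours in its own class, so when $\Delta(G)\le k(n-1)-1$ you obtain a colouring witnessing $G\not\to(K_{1,k},K_n)$. The genuine gap is exactly where you flag it, and your proposed fix does not close it. In the iteration ``record and delete $k$ in-class edges at a stuck vertex, rerun'', suppose after several rounds you reach a good partition of the depleted graph $G'$. That partition gives no contradiction for the \emph{original} $G$: the recorded edges are still present in $G$, they may lie inside classes of your partition, and they can push in-class degrees back to $k$ or more. Hence arriving at a good partition of $G'$ does not contradict $G\to(K_{1,k},K_n)$, and nothing forces the process to record $k^{2}\binom{n-1}{2}$ edges before halting; a single stuck vertex of degree exactly $k(n-1)$ can be ``cured'' after recording only $k$ edges. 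As you concede, the only robust output of the scheme is $\Delta(G)\ge k(n-1)$. The Local-Lemma suggestion is a hope, not an argument. More fundamentally, your scheme uses the arrow hypothesis only at the very last step, whereas the bound you are after is tight and needs that hypothesis structurally.

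The missing idea is an iterated peeling, precisely Lemma~\ref{lem 2} of this paper. Starting from $T_0=V(G)$, apply that lemma with $S=\emptyset$ repeatedly to obtain $T_0\supset T_1\supset\cdots\supset T_{n-2}$ with $G[T_i]\to(K_{1,k},K_{n-i})$ and every vertex of $T_i$ sending at least $k$ edges into $B_i:=T_{i-1}\setminus T_i$. The sets $B_i$ are pairwise disjoint, so the edge sets $E(T_i,B_i)$ are pairwise disjoint, and $|T_i|\ge r(K_{1,k},K_{n-i})=k(n-i-1)+1$ gives
\[
e(G)\ \ge\ \sum_{i=1}^{n-2} k\,|T_i|\ \ge\ k\sum_{i=1}^{n-2}\bigl(k(n-i-1)+1\bigr)\ =\ k^{2}\binom{n-1}{2}+k(n-2),
\]
which is the desired bound with room to spare. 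Note how the arrow hypothesis is invoked at every layer, not just once.
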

\noindent we also use the following lemma of Pikhurko [$ 6 $, Lemma $ 5.1 $].
\begin{lemma}\label{lem 2} {\rm \cite{8}}
Let $ G $ be a graph so that $ G\longrightarrow(K_{1,k}, K_{n}) $. For any set $ S\subset V(G) $, there exists
$ T\subset V(G)$ such that $ \Delta(G\setminus T)<k $, each vertex of $ T$ sends at least $ k $ edges to
$ V(G)\setminus T$ and $ T $ is incident to at least $ k(\vert T \vert - \vert S \vert)+e(S,V(G)) $ edges.
\end{lemma}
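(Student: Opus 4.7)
The plan is to construct the required set $T$ by a two-stage procedure starting from $T_0=S$, and then verify the edge count by a short double-counting argument.

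First I would run the greedy process: while there is some $v \in V(G) \setminus T$ with at least $k$ neighbours in $V(G) \setminus T$, add such a $v$ to $T$. This terminates in a set $T_1 \supseteq S$ with $\Delta(G \setminus T_1) < k$, i.e.\ condition (a). Condition (b) can still fail for some $v \in T_1$ (a vertex of $S$, or one added early in the process). I would then prune: if $v \in T_1$ has fewer than $k$ neighbours in $V(G) \setminus T_1$, move $v$ out of $T$, and whenever this pushes some $u \in V(G) \setminus T$ up to degree exactly $k$ in the new complement, simultaneously move $u$ into $T$, so that $u$ then has exactly $k$ neighbours in the final complement and satisfies (b). Termination follows from a lexicographic potential such as $(|T|, e(G[T]))$.

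Once $T$ is in hand, I would verify the edge count as follows. Write $A = T \setminus S$, $B = S \setminus T$, $C = (V(G) \setminus T) \setminus S$, $D = S \cap T$, so that $T = A \cup D$, $W := V(G) \setminus T = B \cup C$, and $S = B \cup D$. Since the number of edges incident to $T$ equals $e(G) - e(G[W])$ and $e(S, V(G)) = e(G) - e(G[A \cup C])$, the target inequality reduces to
\[
e(G[A]) + e(A, C) - e(G[B]) - e(B, C) \;\ge\; k(|A| - |B|).
\]
Condition (b) applied to $A$ gives $e(A, B) + e(A, C) \ge k|A|$, and condition (a) applied to $B \subseteq W$ gives $2 e(G[B]) + e(B, C) \le (k-1)|B|$. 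Plugging these in, the inequality further reduces to the auxiliary bound $e(A, B) \le e(G[A]) + |B|$, which is automatic when $B = \emptyset$ (the case $S \subseteq T$ produced by Stage~1 alone).

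The main obstacle is therefore to choose $T$ so that the pruning stage terminates cleanly and the auxiliary bound is met. The most transparent route I foresee is to replace the explicit two-stage construction with an extremal one: among all $T \subseteq V(G)$ satisfying (a), pick one minimising, say, $(|T \setminus S|, e(G[T]))$ in lexicographic order, and deduce both condition (b) and $e(A, B) \le e(G[A]) + |B|$ from the extremality of $T$. This is exactly where the Ramsey hypothesis $G \longrightarrow (K_{1,k}, K_n)$ is expected to intervene: the colouring that puts all edges incident to $T$ red and all edges inside $W$ blue would contain no red $K_{1,k}$ whenever (b) fails and no blue $K_n$ whenever $\Delta(G[W]) < k$ together with a suitable size bound, and the resulting contradiction with $G \longrightarrow (K_{1,k}, K_n)$ forces the structural properties needed to close the calculation.
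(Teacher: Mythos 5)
The paper does not actually prove this lemma; it is quoted from Pikhurko [8, Lemma 5.1], so there is no in-paper argument to compare against. Judged on its own, your proposal has the right construction (greedy augmentation starting from $S$, followed by a prune-and-repair phase), but the verification has two genuine gaps. First, termination of the pruning stage does not follow from the potential $(|T|, e(G[T]))$: deleting a vertex of $T$ with fewer than $k$ neighbours outside can force you to re-insert several vertices of $V(G)\setminus T$ whose outside-degree has risen to $k$, so $|T|$ can go up as well as down and your lexicographic pair is not monotone. Second, and more seriously, the edge count is left hanging on the auxiliary inequality $e(A,B)\le e(G[A])+|B|$, which you do not prove; the escape route you sketch (an extremal choice of $T$ plus a red/blue colouring exploiting $G\longrightarrow(K_{1,k},K_{n})$) is not carried out, and the colouring you describe cannot yield a contradiction as stated, since nothing bounds $\vert V(G)\setminus T\vert$ below $r(K_{1,k},K_{n})$. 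The Ramsey hypothesis is in fact a red herring: the conclusion is purely structural and holds for every graph $G$.

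Both gaps close simultaneously if you track the single potential $\phi(T)=E_{T}-k\vert T\vert$, where $E_{T}$ denotes the number of edges incident to $T$; condition (c) is exactly $\phi(T)\ge\phi(S)$. Starting from $T=S$, each greedy addition of a vertex with at least $k$ neighbours outside the current $T$ creates at least $k$ newly incident edges (both endpoints of those edges were previously outside $T$), so $\phi$ does not decrease in Stage 1. In the repair phase, deleting a vertex of $T$ with $j<k$ neighbours outside removes exactly $j$ incident edges and hence increases $\phi$ by $k-j\ge 1$, while re-inserting a vertex of $V(G)\setminus T$ having at least $k$ neighbours in $V(G)\setminus T$ adds at least $k$ incident edges and hence does not decrease $\phi$. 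Since $\phi$ is integer-valued, bounded above by $e(G)$, strictly increases at every deletion and never decreases otherwise, the process terminates in a set $T$ satisfying (a) and (b) with $\phi(T)\ge\phi(S)=e(S,V(G))-k\vert S\vert$, which is (c). With this bookkeeping your double-counting reduction, and the problematic bound on $e(A,B)$, become unnecessary.
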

%-------------------------------------------------------------------------------------------
The following lemma is a modified version of [$ 4 $, Lemma $ 1 $]. But, for the sake of completeness, we state a proof here.
\begin{lemma}\label{lem1} {\rm \cite{4}}
Let $ k \geq 2 $ and G be a graph with $ e(G) \geq \binom{k}{2}+1 $. Then either \vspace{0.2 cm}\\
$ (i) $ G contains an induced subgraph with $ k+1 $ vertices and minimum degree at least\\
\indent $ 1 $ or\\
$ (ii) $ G contains a matching $ M $ with $ \vert M \vert=e(G)$.
\end{lemma}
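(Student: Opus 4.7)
My plan is to work with $H$, the subgraph of $G$ induced on the non-isolated vertices, so that $\delta(H)\ge 1$ and $e(H)=e(G)\ge\binom{k}{2}+1$. Since a graph on at most $k$ vertices has at most $\binom{k}{2}$ edges, one gets $|V(H)|\ge k+1$ for free. If every component of $H$ is a $K_2$, then $E(G)$ itself is a matching of size $e(G)$, so conclusion (ii) holds; I therefore assume that some component of $H$ has at least three vertices and aim to build a set $S\subseteq V(H)$ with $|S|=k+1$ and $\delta(G[S])\ge 1$ to establish (i).

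The construction rests on the elementary observation that in a connected graph $D$ on $c\ge 2$ vertices, for any $2\le c'\le c$, a size-$c'$ subtree of a fixed spanning tree of $D$ yields an induced subgraph of $D$ on $c'$ vertices with minimum degree at least $1$. Hence it suffices to prescribe ``contributions'' $m_j\in\{0\}\cup\{2,3,\ldots,c_j\}$ from each component $C_j$ of $H$ (where $c_j=|V(C_j)|$), summing to $k+1$, and then take $S$ to be a disjoint union of such subtree pieces; the resulting $G[S]$ will automatically satisfy $\delta\ge 1$.

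Order the components so that $c_1\ge 3$. If $c_1\ge k+1$, set $m_1=k+1$ and stop. Otherwise, let $j\ge 2$ be the smallest index with $c_1+\cdots+c_j\ge k+1$; use $C_1,\ldots,C_{j-1}$ in full and take the shortfall $r=k+1-(c_1+\cdots+c_{j-1})$ from $C_j$. Minimality of $j$ forces $r\le c_j$, and this works directly whenever $r\ge 2$. The sole delicate case is $r=1$, i.e.\ $c_1+\cdots+c_{j-1}=k$; here I replace $m_1=c_1$ by $m_1=c_1-1$ (still $\ge 2$ since $c_1\ge 3$) and take $m_j=2$, an edge of $C_j$ (possible since $c_j\ge 2$), giving the total $(c_1-1)+c_2+\cdots+c_{j-1}+2=k+1$. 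The component $C_j$ must exist because $|V(H)|\ge k+1>k$. Handling this parity adjustment is the only subtle step in the argument; the rest is direct bookkeeping.
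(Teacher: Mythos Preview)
Your proof is correct and takes a genuinely different route from the paper's. The paper argues by induction on $k$: assuming $\Delta(G)\le k-1$ (else (i) is immediate from a high-degree vertex and $k$ of its neighbours), it removes a vertex $v$ with $2\le d_G(v)\le k-1$, applies the inductive hypothesis to $G'=G\setminus\{v\}$ to obtain a set $Y$ of type (a) ($|Y|=k$, $\delta(G'[Y])\ge 1$) or type (b) ($G'[Y]$ a matching realising all of $e(G')$), and then performs a case analysis on whether $N_G(v)$ meets $Y$ and on the type of $Y$ in order to assemble the required $(k+1)$-set. Your argument is direct and induction-free: once the non-isolated part $H$ is not a matching you build the $(k+1)$-set greedily from connected pieces of its components, using the spanning-tree observation that any connected graph on $c\ge 2$ vertices admits, for every $c'\in\{2,\dots,c\}$, an induced subgraph on $c'$ vertices with minimum degree at least~$1$. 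Your approach is shorter and sidesteps the paper's somewhat intricate case analysis; the paper's inductive scheme gains nothing extra here. Both arguments confront essentially the same parity obstruction---your $r=1$ adjustment (borrowing one vertex back from $C_1$, which is legal precisely because $c_1\ge 3$) plays the same role as the paper's fallback step when $G'[Y]$ is forced to be a matching and the naive extension by $v$ fails.
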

\begin{proof}
We use induction on $ k $. The case $ k=2 $ is easily verified. Suppose that $ k\geq 3 $. If there is a vertex $ v\in V(G) $ so that
$ d_{G}(v)\geq k $, then the induced subgraph on $ A\cup \{v\} $ has $ k+1 $ vertices with minimum degree at least one, where $ A\subset N_{G}(v) $ is a set containing $ k $ vertices.
Hence we may assume that $ d_{G}(v)\leq k-1 $, for all $ v\in V(G) $.
Furthermore, there exists $ v\in V(G) $ such that $ d_{G}(v)\geq 2 $, otherwise Lemma \ref{lem1} $ (ii) $ holds. Now, choose $ v\in V(G) $ so that
$ 2\leq d_{G}(v) \leq k-1$. Set $ G'=G\setminus \{v\} $. Clearly $ e(G')\geq e(G)-(k-1)\geq \binom{k-1}{2}+1 $. So by the induction hypothesis $ V(G') $ contains a subset $ Y $ so that, either\\
$ (a) $ $ \vert Y \vert=k $ and $ \delta(G'[Y])\geq 1 $, or\\
$ (b) $ $ G'[Y]\cong sK_2 $, so that $ s=e(G') $.\\
At first assume that $ N_{G}(v) \cap Y\neq \emptyset $. If $ Y $ is of type $ (a) $, then set $ X=Y\cup \{v\} $ and Lemma \ref{lem1}
$ (i) $ holds. So assume that $ Y $ is of type $ (b) $. If $ k $ is odd, then the set of $ k+1 $ vertices incident to some set of
$ (k+1)/2 $ disjoint edges contained in
$ G'[Y] $ satisfies Lemma \ref{lem1} $ (i) $. Now suppose that $ k $ is even and $ u\in N_{G}(v)\cap Y $. Let $ X $ be the set of
$ k $ vertices incident to $ k/2 $ disjoint edges in $ G'[Y] $ including an edge incident to $ u $. Clearly $ X\cup \{v\} $ satisfies Lemma \ref{lem1} $ (i) $.\\
Now assume that $ N_{G}(v)\cap Y=\emptyset $. Let $ v_1, v_2 \in N_{G}(v)$. First, suppose that $ Y $ is of type $ (b) $. If $ k $ is odd, then by an argument similar to the previous paragraph we can find a subgraph in $ G $ with $ k+1 $ vertices and minimum degree at least $ 1 $. If $ k $ is even, then let $ X^* $ be the set of vertices incident to some subset of $ (k-2)/2 $ disjoint edges in $G'[Y]$. Set $ U=X^*\cup \{v, v_1, v_2\} $. Clearly $ G[U] $ satisfies Lemma \ref{lem1} $ (i) $.
So we may assume that $ Y $ is of type $ (a) $. Choose any vertex $ u' \in Y $ and write
$ X=(Y\setminus \{u'\}) \cup \{v, v_1\} $. If $ X $ does not satisfy Lemma \ref{lem1} $ (i) $, then there exists $ x\in X $ so that
$ d_{G[X]}(x)=0 $.
Since $ Y $ is of type $ (a) $, so $ x\in Y\setminus \{u'\} $ and $ x\sim u' $ in $ G $. So we have proved that any vertex $ u'\in Y $ is incident to some vertex $ x\in Y $ so that $ d_{G'[Y]}(x)=1 $. In particular this is true for each vertex $ x $ with $ d_{G'[Y]}(x)=1 $. Hence, $ G'[Y] $ is a matching and $ k $ is even.
Now set $ U=X^* \cup \{v,v_1,v_2\} $ where $ X^* $ is the set of $ k-2 $ vertices incident to $ (k-2)/2 $ disjoint edges in
$ G'[Y] $. Clearly $ G[U] $ satisfies Lemma \ref{lem1} $ (i) $. So we are done.
\end{proof}

%$ \hspace*{13.6cm}\square $\\
%-----------------------------------------------------------------------------------------
The following result is an immediate consequence of Lemma \ref{lem1}.
\begin{corollary}\label{cor1} {\rm[$ 4 $, Lemma $ 2 $]}
Let $ k\geq 3 $ and $ G $ be a graph with $ e(G) \geq \binom{k}{2}+1 $. If $ k $ is odd, then $G$ contains an induced subgraph with $ k+1 $ vertices and minimum degree at least $ 1 $.
\end{corollary}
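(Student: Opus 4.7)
The plan is to deduce the corollary directly from the dichotomy in Lemma~\ref{lem1}. Applying that lemma to $G$, either conclusion $(i)$ already holds, in which case there is nothing to prove, or $G$ contains a matching $M$ with $|M|=e(G)$. The latter equation, together with $M\subseteq E(G)$, forces $M=E(G)$, so that the edge set of $G$ is itself a matching; equivalently, $G$ is a disjoint union of edges together with (possibly) some isolated vertices.

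Now I would exploit the parity hypothesis that $k$ is odd, which makes $k+1$ even. Since
\[
e(G)\;\geq\;\binom{k}{2}+1\;=\;\frac{k(k-1)}{2}+1\;\geq\;\frac{k+1}{2}
\]
for every $k\geq 3$, the matching $M$ contains at least $(k+1)/2$ edges. Picking any such collection of $(k+1)/2$ edges and letting $X$ denote the set of their $k+1$ endpoints, the induced subgraph $G[X]$ consists of precisely these $(k+1)/2$ disjoint edges and nothing more (because $E(G)$ is a matching). Hence $|X|=k+1$ and every vertex of $X$ has degree exactly $1$ in $G[X]$, which is conclusion $(i)$ of Lemma~\ref{lem1} and proves the corollary.

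The whole argument is a one-step reduction from Lemma~\ref{lem1}: the only role of the oddness of $k$ is to guarantee that $k+1$ is even, so that alternative $(ii)$ can be upgraded to alternative $(i)$ by covering a set of $k+1$ vertices with a perfect matching inside it. There is therefore no genuine obstacle here; the proof amounts to invoking Lemma~\ref{lem1} and carrying out the elementary selection above. Note that the even case really does require the more delicate construction in Lemma~\ref{lem1}, since a matching cannot by itself supply the single vertex of degree at least $2$ that would be needed to reach $k+1$ odd endpoints with positive degree.
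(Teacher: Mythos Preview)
Your proof is correct and is exactly the argument the paper has in mind: it states the corollary as an ``immediate consequence of Lemma~\ref{lem1}'' without further proof, and the selection of $(k+1)/2$ matching edges you describe is precisely the construction already used inside the proof of Lemma~\ref{lem1} for the odd-$k$ case.
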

%------------------------------------------------------------------------------------------
\begin{remark}\label{rm1}
Let $ G $ be a graph so that $ G\longrightarrow (K_{1,k},K_{n}) $. If $ G $ is edge minimal, then each vertex of $ G $ must be in some clique $ K_{n} $. Otherwise, assume that some vertex $ v\in V(G) $ is not in any clique $ K_n $. Colour the edges of $G'=G\setminus \{v\} $ red or blue arbitrarily and extend this colouring to $ G $ by colouring the edges incident to $ v $ blue. Since $ v $ is not in any blue copy of $ K_n $,
so $ G' $ has either a red copy of $ K_{1,k} $ or a blue copy of $ K_{n} $ and so $ G'\longrightarrow (K_{1,k},K_n) $, which is a contradiction with the edge minimality of $ G $.
\end{remark}
%$ \hspace*{13.6cm}\square $\\
%------------------------------------------------------------------------------------------
\begin{lemma}\label{lemmm}
Let $ k\geq 2 $ and $ n\geq 3k+3 $. Let $ H $ be a graph
with $ R+t $ vertices, where $ R=r(K_{1,k},K_{n})=k(n-1)+1 $ and
$ 0\leq t \leq \lfloor \dfrac{kn-2k^{2}}{(k+1)^{2}}\rfloor $. Set
\begin{eqnarray*}
R'= \left \{
\begin{array}{lr}
\binom{k}{2}+1 & ~\text{if}~ k ~\text{is odd},\\
\\
k(n-1)/2+1 & \text{if} ~k ~\text{is even}.
\end{array} \right.
\end{eqnarray*}
If $ e(H)\geq Rt+\binom{t}{2}+R' $, then $ H $ contains $ t+1 $ disjoint subsets
$ A_{1},\dots, A_{t+1} $ of vertices
so that for
$ 1\leq i \leq t+1 $, we have $ \vert A_{i} \vert = k+1$ and $ \delta(H[A_{i}])\geq 1 $.
\end{lemma}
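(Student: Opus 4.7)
The plan is to proceed by induction on $t$. For the base case $t=0$, $H$ has exactly $R$ vertices and at least $R'$ edges. When $k$ is odd, $R'=\binom{k}{2}+1$ and Corollary \ref{cor1} directly yields a $(k+1)$-set of minimum degree at least $1$. When $k$ is even, $R'=k(n-1)/2+1$, while $R=k(n-1)+1$ is odd (as $k(n-1)$ is even); hence any matching on $V(H)$ has at most $k(n-1)/2<R'$ edges, so outcome $(ii)$ of Lemma \ref{lem1} is ruled out and outcome $(i)$ supplies the required subset.

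For the inductive step ($t\geq 1$) my plan is to extract a single good $(k+1)$-set $A\subseteq V(H)$ with small boundary, namely
\[
e(A, V(H)) \;\le\; R+t-1.
\]
Using $\binom{t}{2}-\binom{t-1}{2}=t-1$, such an $A$ satisfies $e(H\setminus A)\ge R(t-1)+\binom{t-1}{2}+R'$. The graph $H\setminus A$ has $R+t-(k+1)=R+(t-1)-k$ vertices, which is $k$ short of the $R+(t-1)$ required by the inductive hypothesis. To bridge this gap I would adjoin $k$ isolated vertices, producing a graph $\widetilde H$ on $R+(t-1)$ vertices with the same edge count. Since isolated vertices cannot belong to any subset of minimum degree at least $1$, the $t$ disjoint good subsets supplied by the inductive hypothesis applied to $\widetilde H$ all live inside $H\setminus A$; combined with $A$ they yield the required $t+1$ subsets in $H$.

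The main obstacle is producing a good $(k+1)$-set $A$ with boundary at most $R+t-1$. My proposal is to let $S$ be the set of the $t$ vertices of largest degree in $H$; since the maximum number of edges incident to any $t$-vertex set is $Rt+\binom{t}{2}$, the hypothesis forces $e(H\setminus S)\ge R'$. The base-case argument applied to $H\setminus S$ (a graph on exactly $R$ vertices) then yields a good $(k+1)$-set $A\subseteq V(H)\setminus S$. Because the vertices of $A$ avoid the top $t$ degrees, an averaging argument on the degree sequence—together with the bounds $n\ge 3k+3$ and $t\le\lfloor(kn-2k^{2})/(k+1)^{2}\rfloor$—should force $\sum_{v\in A} d_H(v)$ to be small enough to enforce $e(A, V(H))\le R+t-1$. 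If the naive averaging falls short, a local swap replacing a vertex of $A$ by a lower-degree vertex of $V(H)\setminus S$ (while preserving $\delta(H[A])\ge 1$) should recover the bound; the exchange is made possible by the relatively sparse structure of $H\setminus S$ guaranteed by Lemma \ref{lem1}. Carrying out this degree analysis rigorously is the most delicate part of the argument, and is where I expect the constraint $n\ge 3k+3$ to be tight.
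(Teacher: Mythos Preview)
Your base case is correct and matches the paper's. The gap is in the inductive step: the claim that one can always locate a good $(k+1)$-set $A$ with $e(A,V(H))\le R+t-1$ is false. The lemma imposes only a \emph{lower} bound on $e(H)$, so $H$ may be as dense as the complete graph $K_{R+t}$; in that case every $(k+1)$-set $A$ satisfies
\[
e(A,V(H))=(k+1)(R+t-1)-\binom{k+1}{2}\;>\;R+t-1,
\]
and your inductive engine stalls even though the conclusion is trivial for $K_{R+t}$. Even when $e(H)$ equals the stated lower bound, avoiding the $t$ largest degrees does not control $\sum_{v\in A}d_H(v)$: the $(t+1)$-st largest degree can still be of order $R$, so the promised ``averaging argument'' has no traction, and the ``local swap'' is not specified. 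This is a genuine missing idea, not a detail.

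The paper proceeds by a different dichotomy. It fixes the threshold $(k+1)(t+1)$ and splits into two cases. If some vertex $v$ has $d_H(v)\ge (k+1)(t+1)$, delete $v$; since this removes at most $R+t-1$ edges, induction on $H\setminus\{v\}$ (which has exactly $R+(t-1)$ vertices) yields $t$ disjoint good sets, and the high degree of $v$ guarantees $k$ neighbours of $v$ outside their union, producing the $(t+1)$-st set. If instead $\Delta(H)<(k+1)(t+1)$, first obtain $t$ good sets by induction; deleting their union $\mathcal{A}$ of size $t(k+1)$ removes at most $t(k+1)\big((k+1)(t+1)-1\big)-t(k+1)/2$ edges, and the hypothesis $t\le\lfloor (kn-2k^2)/(k+1)^2\rfloor$ is precisely what forces $e(H\setminus\mathcal{A})\ge R'$, so the base-case argument supplies the final set. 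The idea you are missing is this degree threshold: either a single rich vertex furnishes a star-shaped $(k+1)$-set after the inductive call, or all degrees are small enough that peeling off $t(k+1)$ vertices cannot destroy too many edges.
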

\begin{proof}
We use induction on $ t $. First let $ t=0 $.
If there is no subset $ A_1 \subseteq V(H) $ with $ \vert A_1 \vert=k+1$ and $ \delta(H[A_1])\geq 1 $, then using Lemma \ref{lem1} and Corollary \ref{cor1}
we may assume that $ k $ is even
and $ H $ contains a matching $ M $ with $ \vert M \vert= e(H) $. But it is impossible, since
$ \vert V(H) \vert =k(n-1)+1$ and $ e(H)\geq k(n-1)/2+1 $. Now, let $ t\geq 1 $. Set
\begin{eqnarray*}
A=\{v \in V(H) : d_{H}(v)\geq (k+1)(t+1)\}.
\end{eqnarray*}
We have two following cases:\vspace*{0.5 cm}\\
\textbf{Case 1.}
$ A\neq \emptyset. $ \vspace*{0.5cm}\\
\noindent Set $ H'=H\setminus \{v\} $, where $ v \in A $. Since $ d_{H}(v)\leq R+t-1 $, we have
\begin{eqnarray*}
e(H') \geq R(t-1) +\binom{t-1}{2}+R'.
\end{eqnarray*}
By the induction hypothesis, $ H' $ contains $ t $ disjoint subsets
$ A_{1},\dots, A_{t} $ of vertices
so that for $ 1\leq i \leq t $, $ \vert A_{i} \vert = k+1$ and $ \delta(H'[A_{i}])\geq 1$. Choose
$ U\subseteq N_{H}(v) \setminus \bigcup_{i=1}^{t}A_{i}$ so that $ \vert U \vert =k$
(note that, this is possible since $ d_{H}(v) \geq (k+1)(t+1) $).
Clearly $ U \cup \{v\}$ is a new subset, disjoint from $ A_{i}s$, of order $ k+1 $ and minimum degree at least
$ 1 $ in $ H $. So we are done.\vspace*{0.5 cm}\\
\textbf{Case 2.}
$ A=\emptyset $.\vspace*{0.5cm}\\
\noindent By the induction hypothesis, $ H $
contains $ t $ disjoint subsets
$ A_{1},\dots, A_{t} $ of vertices
so that for $ 1\leq i \leq t $, $ \vert A_{i} \vert = k+1$ and $ \delta(H[A_{i}])\geq 1$.\\

Set
$ \mathcal{A}=\bigcup_{i=1}^{t}A_{i} $ and $ H'=H\setminus \mathcal{A} $.
Since each vertex of $ H $ has degree less than $ (k+1)(t+1) $, we have
\begin{eqnarray*}
e(H') \geq Rt+\binom{t}{2}+R'-t(k+1)\big((t+1)(k+1)-1\big)+t(k+1)/2.
\end{eqnarray*}
Since $ t\leq \lfloor\dfrac{kn-2k^{2}}{(k+1)^{2}}\rfloor $,
\begin{eqnarray*}
Rt+\binom{t}{2}+t(k+1)+t(k+1)/2-t(t+1)(k+1)^{2} \geq 0.
\end{eqnarray*}
So we have
$ e(H') \geq R' $.
If there exists a subset $ A_{t+1} $ of $ V(H') $ so that\linebreak
$ \vert A_{t+1}\vert=k+1 $ and
$ \delta(H'[A_{t+1}])\geq 1 $, then we are done.
Otherwise, using Lemma \ref{lem1} and Corollary \ref{cor1}, we may assume that $ k $ is even
and $ H' $ contains a matching $ M $ so that
$ \vert M \vert= e(H') \geq k(n-1)/2+1 $. But it is impossible,
since
$ \vert V(H') \vert =k(n-t-1)+1$. This completes the proof.
\end{proof}
\noindent Note that in the previous lemma we set $ n\geq 3k+3 $ to guarantee $ \lfloor\dfrac{kn-2k^{2}}{(k+1)^{2}}\rfloor\geq 1 $ .\\
%----------------------------------------------------------------------------------------------------------------------------
Let $ G $ be a graph so that $ G\longrightarrow(K_{1,k},K_{n}) $. In the following theorem we present a sufficient condition on $ G $ so that $ e(G)\geq \hat{r}^{*}(K_{1,k},K_{n}) $.
\begin{theorem}\label{thm a}
Let $ k\geq 2 $ and $ n\geq 3k+3 $. If $ G\longrightarrow (K_{1,k},K_n)$ and $ \vert G \vert=R+\ell $ so that $ R=r(K_{1,k}, K_n)=k(n-1)+1 $ and $ 0\leq \ell \leq \lfloor\dfrac{kn-2k^{2}}{(k+1)^{2}}\rfloor $, then $ e(G)\geq \hat{r}^{*}(K_{1,k}, K_{n}). $
\end{theorem}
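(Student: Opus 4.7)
The plan is to proceed by contradiction: assume $e(G)<\hat r^{*}(K_{1,k},K_{n})$. From the closed-form expression in Theorem~\ref{thm2} we have $\binom{R}{2}-\hat r^{*}(K_{1,k},K_{n})=R'-1$, where $R'$ is the quantity appearing in Lemma~\ref{lemmm}, and therefore
$$e(\bar G)\;=\;\binom{R+\ell}{2}-e(G)\;\geq\;\binom{R+\ell}{2}-\binom{R}{2}+R'\;=\;R\ell+\binom{\ell}{2}+R'.$$
The restriction $0\le\ell\le\lfloor(kn-2k^{2})/(k+1)^{2}\rfloor$ is exactly what Lemma~\ref{lemmm} needs, so applying the lemma to $H=\bar G$ with $t=\ell$ produces $\ell+1$ pairwise disjoint sets $A_{1},\dots,A_{\ell+1}\subseteq V(G)$, each of size $k+1$, with $\delta(\bar G[A_{i}])\ge 1$. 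Equivalently, every vertex of $A_{i}$ has at least one non-neighbour inside $A_{i}$, so $\Delta(G[A_{i}])\le k-1$.

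I would now choose a vertex $v_{i}\in A_{i}$ from each $A_{i}$, set $U=\{v_{1},\dots,v_{\ell+1}\}$, and aim to show that $G':=G\setminus U$ still satisfies $G'\longrightarrow(K_{1,k},K_{n})$. Because $|V(G')|=R-1<r(K_{1,k},K_{n})$, no graph on $R-1$ vertices can arrow $(K_{1,k},K_{n})$, so this immediately yields the desired contradiction. Concretely, I would take any red/blue colouring $c$ of $E(G')$ that avoids a red $K_{1,k}$ and a blue $K_{n}$ and try to extend it to $E(G)$, producing a contradiction with $G\longrightarrow(K_{1,k},K_{n})$.

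The natural extension is as follows: for each $v_{i}$, colour every $G$-edge from $v_{i}$ into $A_{i}\setminus\{v_{i}\}$ red, and every other $G$-edge incident to $v_{i}$ (including those to the remaining $v_{j}$'s and to $V(G)\setminus\bigcup_{j}A_{j}$) blue. Because $\Delta(G[A_{i}])\le k-1$, the red degree of each $v_{i}$ is at most $k-1$, so no red $K_{1,k}$ is centred at $v_{i}$; because the $A_{i}$'s are disjoint, every $u\in V(G')$ picks up at most one new red edge, and with a careful choice of $v_{i}$ inside $A_{i}$ (for instance, one that sends no $G$-edge to a red-saturated vertex of $A_{i}\setminus\{v_{i}\}$) a red $K_{1,k}$ centred in $V(G')$ is likewise avoided. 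The main obstacle I foresee is ruling out a blue $K_{n}$ in the extended colouring: any such clique must pass through some $v_{i}$ and force a blue $K_{n-1}$ inside $N_{G}(v_{i})\setminus A_{i}$ under $c$. To exclude this I plan to reduce to an edge-minimal $G$ (so that Remark~\ref{rm1} places each $v_{i}$ in some $n$-clique of $G$), apply Lemma~\ref{lem 2} with $S=U$ to locate the large cliques of $G$, and use the hypothesis $n\ge 3k+3$ to select each $v_{i}$ within its $A_{i}$ so that no such blue $K_{n-1}$ can arise. Coordinating the $\ell+1$ simultaneous choices so that they do not interact pathologically is where I expect most of the remaining work to lie.
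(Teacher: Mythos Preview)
Your argument matches the paper exactly up through the application of Lemma~\ref{lemmm}: from $e(G)<\hat r^{*}(K_{1,k},K_{n})$ you correctly deduce $e(\overline G)\ge R\ell+\binom{\ell}{2}+R'$ and obtain the $\ell+1$ disjoint sets $A_{1},\dots,A_{\ell+1}$ with $|A_i|=k+1$ and $\Delta(G[A_i])\le k-1$.

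From that point on, however, you take a much harder road than necessary, and the part you yourself flag as the ``main obstacle'' is a genuine gap: showing that a red/blue colouring of $G\setminus U$ can be extended to $G$ without creating a blue $K_n$ requires, for each $v_i$, ruling out a blue $K_{n-1}$ in $N_G(v_i)\setminus A_i$ under an \emph{arbitrary} bad colouring $c$ of $G'$. Nothing in Remark~\ref{rm1} or Lemma~\ref{lem 2} gives you control over where such blue $(n-1)$-cliques sit under $c$, and the freedom in choosing $v_i$ inside $A_i$ (at most $k+1$ options) is far too limited to dodge them. Simultaneously, the ``careful choice'' needed to keep red degrees in $V(G')$ below $k$ may conflict with whatever choice the blue side would require. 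As written, the plan is not a proof.

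The paper sidesteps all of this with a one-line direct colouring. Since
\[
|V(G)|-(\ell+1)(k+1)=k(n-1)+1+\ell-(\ell+1)(k+1)=k(n-2-\ell),
\]
the vertices outside $\bigcup_i A_i$ split exactly into $n-2-\ell$ blocks of size $k$. Together with $A_1,\dots,A_{\ell+1}$ this gives a partition $X_1,\dots,X_{n-1}$ of $V(G)$ into $n-1$ parts, each of size $k$ or $k+1$. Colour every edge inside a part red and every edge between parts blue. There is no red $K_{1,k}$ because $\Delta(G[X_i])\le k-1$ for every $i$ (either $|X_i|=k$, or $X_i=A_j$ and $\delta(\overline G[A_j])\ge1$). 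There is no blue $K_n$ because any $n$ vertices meet some part in at least two vertices, and that pair is either a red edge or a non-edge. This contradicts $G\longrightarrow(K_{1,k},K_n)$ immediately, with no vertex deletion, no extension of colourings, and no appeal to Remark~\ref{rm1} or Lemma~\ref{lem 2}.
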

\begin{proof}
Suppose to the contrary that $ e(G) < \hat{r}^{*}(K_{1,k},K_{n}) $.
So using Theorem \ref{thm2}, we have $ e(\overline{G}) \geq R\ell+\binom{\ell}{2}+R'$, where
\begin{eqnarray*}
R'= \left \{
\begin{array}{lr}
\binom{k}{2}+1 & ~\text{if}~ k ~\text{is odd},\\
\\
k(n-1)/2+1 & \text{if} ~k ~\text{is even}.
\end{array} \right.
\end{eqnarray*}
Using Lemma \ref{lemmm}, $ \overline{G} $ contains $\ell+1 $ disjoint subsets
$ A_{1},\dots, A_{\ell+1} $ of vertices
so that for
$ 1\leq i \leq \ell+1 $, we have $ \vert A_{i} \vert = k+1$ and $ \delta(\overline{G}[A_{i}])\geq 1 $.
Now consider the following colouring on $ G $. Partition $ V(G) $ into subsets
$ X_{1},\dots, X_{n-1} $ so that $ X_{i}=A_{i} $, for $ 1\leq i \leq \ell+1 $ and $ \vert X_{i} \vert =k$, for
$ i=\ell+2,\dots, n-1 $.
Colour every edge of $ G[X_{i}] $ red $ (1\leq i \leq n-1) $ and all other edges of $ G $ blue. Then there is no red copy of
$ K_{1,k} $ and no blue copy of $ K_{n} $. Which is a contradiction with the assumption that
$ G\longrightarrow (K_{1,k},K_{n}) $.\\
%$ \hspace*{13.6 cm}\square $\\
\end{proof}
%------------------------------------------------------------------------------------------
\section{The proof of Theorem \ref{thm3}}
\begin{proof}
Since $\hat{r}(K_{1,k},K_{n}) \leq \hat{r}^{*}(K_{1,k},K_{n})$, we shall prove just the lower bound for the claimed size Ramsey number. Let $ k\geq 2 $ and $ n\geq k^{3}+2k^{2}+2k $. Also let $ G $ be a graph so that
$ G\longrightarrow (K_{1,k},K_{n}) $. Without loss of generality we may assume that $ G $ is edge minimal.
Let $ \vert G \vert =R+\ell = k(n-1)+1+\ell $, where $ \ell \geq 0 $. We will show that
$ e(G)\geq \hat{r}^{*}(K_{1,k},K_{n}) $. If $ \ell \leq \lfloor\dfrac{kn-2k^{2}}{(k+1)^{2}}\rfloor $, then using Theorem \ref{thm a}, we are done. So we may assume that $ \ell> \lfloor\dfrac{kn-2k^{2}}{(k+1)^{2}}\rfloor $.\\
Set $ f(k,n)=\lfloor\dfrac{kn-2k^{2}}{(k+1)^{2}}\rfloor $ and
for $ j=1,\dots,n-3 $ set $ m_{j}=\max\{0,f(k,n-j)\} $. Let $ T_{0}=V(G) $. Clearly
$ G[T_{0}]\longrightarrow (K_{1,k},K_{n}) $
and
$ \vert T_{0} \vert =k(n-1)+1+\ell_{0}$, where  $ \ell_{0}=\ell $.
Repeat the following process as long as possible.\vspace*{0.5cm}\\
\textbf{Step 1}\\
Using Lemma \ref{lem 2}, for $ S=\emptyset $ there exists
$ T_{1}\subset T_{0} $ such that $ \Delta(G[T_{0}\setminus T_{1}])<k $ and each vertex $ x\in T_{1} $ sends at least $ k $ edges to $B_{1}=T_{0}\setminus T_{1}$. Let $ T_{1} $ be such a set with the minimum number of vertices. Note that $ G[T_{1}]\longrightarrow(K_{1,k},K_{n-1}) $.
To see this, colour the edges of $ G[T_{1}]$ arbitrarily and extend this colouring to $ G[T_{0}] $ by colouring the edges of $ B_{1} $ red and all so far uncoloured edges blue. Since $ G\longrightarrow (K_{1,k},K_{n}) $, then $ G[T_{1}] $ contains a red copy of $ K_{1,k} $
or a blue copy of $ K_{n-1} $. Therefore,
$ \vert T_{1} \vert\geq r(K_{1,k}, K_{n-1})=k(n-2)+1$. Let
$\vert T_{1} \vert=k(n-2)+1+\ell_{1}$, where $ \ell_{1}\geq 0 $. Clearly
$ \vert B_{1} \vert=\vert T_{0}\vert-\vert T_{1}\vert =k+\ell_{0}-\ell_{1}$.
Since each vertex of $ T_{1} $ sends at least $ k $ edges to $ B_{1} $, so $ \vert B_{1} \vert \geq k $ which implies that $ \ell_{1}\leq \ell_{0} $.
If $ \ell_{1}\leq m_{1} $, then stop. Otherwise go to Step $ 2 $.\vspace*{0.5cm}\\
\textbf{Step i} $ (2 \leq i \leq n-3) $\\
Since $ G[T_{i-1}]\longrightarrow (K_{1,k},K_{n-i+1}) $ by Lemma \ref{lem 2}, for $ S=\emptyset $ there exists
$ T_{i}\subset T_{i-1} $ such that $ \Delta(G[T_{i-1}\setminus T_{i}])<k $ and each vertex $ x\in T_{i} $ sends at least $ k $ edges to $B_{i}=T_{i-1}\setminus T_{i}$. Let $ T_{i} $ be such a set with the minimum number of vertices. Note that $ G[T_{i}]\longrightarrow(K_{1,k},K_{n-i}) $.
To see this, colour the edges of $ G[T_{i}]$ arbitrarily and extend this colouring to $ G $ by colouring the edges of $ G[B_{1}],\dots, G[B_{i}] $ red and all so far uncoloured edges blue. Since $ G\longrightarrow (K_{1,k},K_{n}) $, then $ G[T_{i}] $ contains a red copy of $ K_{1,k} $
or a blue copy of $ K_{n-i} $. Therefore,
$ \vert T_{i} \vert\geq r(K_{1,k}, K_{n-i})=k(n-i-1)+1$. Let
$\vert T_{i} \vert=k(n-i-1)+1+\ell_{i}$, where $ \ell_{i}\geq 0 $. Clearly
$ \vert B_{i} \vert=\vert T_{i-1}\vert-\vert T_{i}\vert =k+\ell_{i-1}-\ell_{i}$.
Since each vertex of $ T_{i} $ sends at least $ k $ edges to $ B_{i} $, so $ \vert B_{i} \vert \geq k $ which implies that $ \ell_{i}\leq \ell_{i-1} $.
If either $ \ell_{i}\leq m_{i} $ or $ i=n-3 $, then stop. Otherwise go to Step $ i+1 $.\\

Now, assume that the above procedure terminates in step $ j $. We have one of the following cases.
\vspace*{0.5 cm}\\
\textbf{Case 1.} $ j=1 $.\vspace*{0.5cm}\\
\noindent As $ n\geq k^{3}+2k^{2}+2k $, we have $ m_{1}=f(k,n-1) $. Since $ \ell_{1}\leq f(k,n-1) $, using Theorem \ref{thm a}, we have
$ e(G[T_{1}])\geq \hat{r}^{*}(K_{1,k},K_{n-1}) $. We have two following subcases.\vspace*{0.5cm}\\
\textbf{Subcase 1.1} $  \ell_{1}\geq \lceil k/2 \rceil $.\vspace*{0.5cm}\\
\noindent In this case $ \vert T_{1}\vert \geq k(n-2)+1+\lceil \frac{k}{2} \rceil$. Since each vertex $ x\in T_{1} $ sends at least
$ k $ edges to $ B_{1}$, we have
\begin{eqnarray*}
e(G)\geq e(G[T_{1}])+ k\vert T_{1}\vert \geq \hat{r}^{*}(K_{1,k},K_{n-1})+\dfrac{k^{2}(2n-3)+2k}{2}\geq \hat{r}^{*}(K_{1,k},K_{n}).
\end{eqnarray*}
So we are done.\vspace*{0.5cm}\\
\textbf{Subcase 1.2} $  \ell_{1}< \lceil k/2 \rceil $.\vspace*{0.5cm}\\
\noindent Clearly $ \vert B_{1}\vert =k+\ell_{0}-\ell_{1}\geq k+\ell_{0}-\lceil \frac{k}{2}\rceil +1= \ell_{0}+ \lfloor \frac{k}{2} \rfloor+1$.
Using Remark \ref{rm1} and the fact that $ \Delta(G[B_{1}])<k  $, we conclude that each vertex of $ B_{1} $ sends at least $ n-k $ edges to $ T_{1} $. Since $ n\geq k^{3}+2k^{2}+2k $, we have
\begin{align*}
e(G)\geq e(G[T_{1}])+\vert B_{1}\vert(n-k) &\geq \hat{r}^{*}(K_{1,k},K_{n-1})+(\ell _{0}+\lfloor \frac{k}{2} \rfloor +1)(n-k)\\
&\geq \hat{r}^{*}(K_{1,k},K_{n-1})+\big(\lfloor \dfrac{kn-2k^{2}}{(k+1)^{2}}\rfloor+\lfloor \frac{k}{2} \rfloor +2\big)(n-k)\\
&\geq \hat{r}^{*}(K_{1,k},K_{n-1})+\big(\dfrac{kn-2k^{2}}{(k+1)^{2}}+\lfloor \frac{k}{2} \rfloor +1\big)(n-k)\\
&\geq \hat{r}^{*}(K_{1,k},K_{n-1})+\dfrac{k^{2}(2n-3)+k}{2}\geq \hat{r}^{*}(K_{1,k},K_{n}).
\end{align*}
\vspace*{0.5 cm}\\
\textbf{Case 2.} $ 2\leq j \leq n-3k-3 $.\vspace*{0.5cm}\\
\noindent In this case we have $ m_{j}=f(k,n-j) $. Since $ \ell_{j}\leq m_{j} $, using Theorem \ref{thm a}, we have $ e(G[T_{j}])\geq \hat{r}^{*}(K_{1,k}, K_{n-j}) $. Therefore,
\begin{align*}
\hspace*{-1 cm}e(G) &\geq e(G[T_{j}]) +\vert T_{j} \vert kj+\sum_{i=2}^{j} \vert B_{i} \vert k(i-1)\\
&\geq \hat{r}^{*}(K_{1,k},K_{n-j})+\big(k(n-j-1)+1+\ell _{j}\big)kj+\sum_{i=2}^{j}\big(k+\ell _{i-1}-\ell _{i}\big)(i-1)k\\
&=\hat{r}^{*}(K_{1,k},K_{n-j})+k^{2}j(n-j-1)+kj+kj\ell _{j}+k^{2}\sum_{i=2}^{j}(i-1)
+k\sum_{i=2}^{j}(\ell_{i-1}-\ell _{i})(i-1)\\
&=\hat{r}^{*}(K_{1,k},K_{n-j})+k^{2}j(n-j-1)+kj+\frac{k^{2}j(j-1)}{2}+k\sum_{i=1}^{j}\ell _{i}.
\end{align*}
If $ k $ is even, using Theorem \ref{thm2}, we have $ \hat{r}^{*}(K_{1,k},K_{n-j})= k^{2}(n-j-1)^{2}/2 $. So
\begin{align*}
e(G) &\geq \dfrac{k^{2}(n-j-1)^{2}}{2}+k^{2}j(n-j-1)+kj+\dfrac{k^{2}j(j-1)}{2}+k\sum_{i=1}^{j}\ell_{i}\\
&=\dfrac{k^{2}(n-1)^{2}}{2}-\frac{k^{2}j}{2}+kj+k\sum_{i=1}^{j}\ell _{i}.
\end{align*}
Note that if $ k $ is even, then $\hat{r}^{*}(K_{1,k},K_{n})=k^{2}(n-1)^{2}/2$. So it suffices to show that
\begin{align*}
k\sum_{i=1}^{j}\ell _{i}+kj-\frac{k^{2}j}{2}\geq 0.
\end{align*}
Since for $ 1\leq i \leq j-1 $, we have $ \ell_{i}\geq \lfloor\dfrac{k(n-i)-2k^{2}}{(k+1)^{2}}\rfloor +1\geq\dfrac{k(n-i)-2k^{2}}{(k+1)^{2}} $, therefore
\begin{align*}
k\sum_{i=1}^{j}\ell _{i}+kj-\frac{k^{2}j}{2}&\geq k\sum_{i=1}^{j-1}\dfrac{k(n-i)-2k^{2}}{(k+1)^{2}}+kj-\frac{k^{2}j}{2}\\
&=k\big(\dfrac{kn(j-1)-k\frac{j(j-1)}{2}-2k^{2}(j-1)}{(k+1)^{2}}\big)-\frac{(k^{2}-2k)j}{2}\\
&=\dfrac{k(j-1)\big(2kn-kj-4k^{2}-(k-2)(k+1)^{2}\frac{j}{j-1}\big)}{2(k+1)^{2}}\geq 0.
\end{align*}
The last inequality is true since $ n\geq k^{3}+2k^{2}+2k $ and $ 2\leq j\leq n-3k-3 $ imply
\begin{eqnarray*}
2kn-kj-4k^{2}-(k-2)(k+1)^{2}\frac{j}{j-1}\geq 0.
\end{eqnarray*}
So we are done.\\ If $ k $ is odd, then $ \hat{r}^{*}(K_{1,k},K_{n})=\binom{k(n-1)+1}{2}-\binom{k}{2} $. To verify that $ e(G)\geq \hat{r}^{*}(K_{1,k},K_{n}) $, it suffices to show that
\begin{align*}
k\sum_{i=1}^{j}\ell _{i}+\frac{kj-k^{2}j}{2}\geq 0.
\end{align*}
The above inequality follows from a similar argument that used for the case $ k $ is even.\vspace*{0.5cm}\\
\textbf{Case 3.} $ n-3k-3<j\leq n-4 $.\vspace*{0.5cm}\\
In this case $ m_{j}=0 $. Note that $ G[T_{j}]\longrightarrow (K_{1,k},K_{n-j}) $ and $ \vert T_{j} \vert=r(K_{1,k},K_{n-j})+\ell_{j} $. Since $ 0\leq \ell_{j}\leq m_{j} $, we have
$ \vert T_{j} \vert=r(K_{1,k},K_{n-j}) $. Now, by the definition of $ \hat{r}^{*}(K_{1,k},K_{n-j})$, we have
$ e(G[T_{j}])\geq \hat{r}^{*}(K_{1,k},K_{n-j}) $.
By an argument similar to Case $ 2 $, we have
\begin{align*}
e(G) &\geq e(G[T_{j}]) +\vert T_{j} \vert kj+\sum_{i=2}^{j} \vert B_{i} \vert k(i-1)\\
&\geq \hat{r}^{*}(K_{1,k},K_{n-j})+k^{2}j(n-j-1)+kj+\frac{k^{2}j(j-1)}{2}+k\sum_{i=1}^{j-1}\ell _{i}.
\end{align*}
Our aim is to show that $ e(G)\geq \hat{r}^{*}(K_{1,k},K_{n}) $. If $ k $ is even, similar to Case $ 2 $, it suffices to show that
\begin{eqnarray*}
k\sum_{i=1}^{j-1}\ell _{i}+kj-\frac{k^{2}j}{2}\geq 0.
\end{eqnarray*}
Note that for $ 1\leq i\leq j-1,~ \ell_{i}> m_{i} $ and
\begin{eqnarray*}
m_{i}= \left \{
\begin{array}{lr}
f(k,n-i) & ~ 1 \leq i \leq n-3k-3,\\
\\
0 & ~ n-3k-2\leq i \leq j-1.
\end{array} \right.
\end{eqnarray*}
Since for $ 1\leq i \leq n-3k-3 $, we have $ \ell_{i}\geq \lfloor\dfrac{k(n-i)-2k^{2}}{(k+1)^{2}}\rfloor +1\geq\dfrac{k(n-i)-2k^{2}}{(k+1)^{2}} $, therefore
\begin{align*}
k\sum_{i=1}^{j-1}\ell _{i}+kj-\frac{k^{2}j}{2}&\geq k\sum_{i=1}^{n-3k-3}\dfrac{k(n-i)-2k^{2}}{(k+1)^{2}}+k\sum_{i=n-3k-2}^{j-1}1-\frac{(k^{2}-2k)j}{2}\\
&>k \sum_{i=1}^{n-3k-3}\dfrac{k(n-i)-2k^{2}}{(k+1)^{2}}-\frac{(k^{2}-2k)j}{2}\\
&= k\big(\dfrac{k(n-3k-3)\big(n-\frac{n-3k-2}{2}-2k\big)}{(k+1)^{2}}\big)-\frac{(k^{2}-2k)j}{2}\\
&=k\big(\dfrac{k(n-3k-3)(n-k+2)-(k+1)^{2}(k-2)j}{2(k+1)^{2}}\big)\geq 0.
\end{align*}
The last inequality is true, since $n\geq k^{3}+2k^{2}+2k $ and $ n-3k-2\leq j\leq n-4 $ imply
\begin{align*}
k(n-3k-3)(n-k+2)-(k+1)^{2}(k-2)j\geq 0.
\end{align*}
So when $ k $ is even, we are done.\\
Similarly, when $ k $ is odd, it can be shown that
$ e(G)\geq \hat{r}^{*}(K_{1,k},K_{n}) $.\vspace*{0.5cm}\\
\textbf{Case 4.} $j=n-3$.\vspace*{0.5cm}\\
\noindent In this case for every $ 1\leq i \leq n-4 $, we have
$  \ell_{i}> m_{i} $. Note that, $ G[T_{n-3}]\longrightarrow (K_{1,k},K_{3}) $ and\\
\begin{eqnarray*}
\vert T_{n-3}\vert =r(K_{1,k},K_{3})+\ell_ {n-3} =2k+1+\ell_{n-3}.
\end{eqnarray*}
Using Theorem \ref{them}, we have $ e(G[T_{n-3}])\geq k^{2} $. By an argument similar to Case
$ 2 $, we have
\begin{align*}
e(G) &\geq e(G[T_{n-3}]) +\vert T_{n-3} \vert k(n-3)+\sum_{i=2}^{n-3} \vert B_{i} \vert k(i-1)\\
&\geq k^{2}+(2k+1+\ell _{n-3})k(n-3)+\sum_{i=2}^{n-3}\big(k+\ell_ {i-1}-\ell _{i}\big)(i-1)k\\
&=k^{2}+2k^{2}(n-3)+k(n-3)+\frac{k^{2}(n-3)(n-4)}{2}+k\sum_{i=1}^{n-3}\ell _{i}.
\end{align*}
Again, we are going to show that $e(G) \geq \hat{r}^{*}(K_{1,k},K_{n}) $. When $ k $ is even, we have\linebreak
$ \hat{r}^{*}(K_{1,k},K_{n})=k^{2}(n-1)^{2}/2 $. It sufficies to show that
\begin{align*}
k\sum_{i=1}^{n-3}\ell _{i}+k^{2}+2k^{2}(n-3)+k(n-3)\geq \frac{k^{2}}{2}(5n-11).
\end{align*}
This inequality is certainly true if
\begin{align*}
2k\sum_{i=1}^{n-3}\ell _{i}+k^{2}+2kn\geq k^{2}n+6k.
\end{align*}
By an argument similar to Case $ 3 $, we have
\begin{align*}
2k\sum_{i=1}^{n-3}\ell _{i}+k^{2}+2kn&\geq 2k\sum_{i=1}^{n-3k-3}\dfrac{k(n-i)-2k^{2}}{(k+1)^{2}}+2k\sum_{i=n-3k-2}^{n-4}1+k^{2}+2kn\\
&=\dfrac{k^{2}(n-3k-3)(n-k+2)}{(k+1)^{2}}+2k(3k-1)+k^{2}+2kn\\
&=\dfrac{k^{2}(n-3k-3)(n-k+2)+(7k^{2}-2k+2kn)(k+1)^{2}}{(k+1)^{2}}\\
&\geq k^{2}n+6k.
\end{align*}
The last inequality holds, since $ n\geq k^{3}+2k^{2}+2k $.\\
If $ k $ is odd, then $ \hat{r}^{*}(K_{1,k},K_{n})=\binom{k(n-1)+1}{2}-\binom{k}{2} $. In this case, it suffices to show that
\begin{align*}
2k\sum_{i=1}^{n-3}\ell _{i}+2k^{2}+kn\geq k^{2}n+6k.
\end{align*}
Again, since $ n\geq k^{3}+2k^{2}+2k $, the above inequality holds. So we are done and the proof is completed.\\
\end{proof}
%_________________________________________________________________________________
\footnotesize

\end{document}